\documentclass{article}
\usepackage{amsmath}
\usepackage{amssymb}

\setcounter{MaxMatrixCols}{10}

\newtheorem{theorem}{Theorem}

\newtheorem{corollary}[theorem]{Corollary}

\newtheorem{definition}[theorem]{Definition}
\newtheorem{example}[theorem]{Example}

\newtheorem{proposition}[theorem]{Proposition}
\newtheorem{remark}[theorem]{Remark}

\newenvironment{proof}[1][Proof]{\noindent\textbf{#1.} }{\ \rule{0.5em}{0.5em}}
\input{tcilatex}
\begin{document}

\title{A Topological Approach to Soft Covering Approximation Space }
\author{Naime Tozlu\thanks{%
naimetozlu@nigde.edu.tr}, Saziye Yuksel\thanks{%
syuksel@selcuk.edu.tr} and Tugba Han Simsekler\thanks{%
tsimsekler@hotmail.com.tr} \\
{\small Nigde University, Science and Art Faculty, Department of
Mathematics, Nigde, Turkey. }\\
{\small Selcuk University, Science Faculty, Department of Mathematics,
Konya, Turkey. }\\
{\small Kafkas University}, {\small Science and Art Faculty, Department of
Mathematics, Kars, Turkey.}}
\date{}
\maketitle

\begin{abstract}
Theories of rough sets and soft sets are powerful mathematical tools for
modelling various types of vagueness. Hybrid model combining a rough set
with a soft set which is called soft rough set proposed by Feng et al. [3]
in 2010. In this paper, we study soft covering based rough sets from the
topological view. We present under which conditions soft covering lower
approximation operation become interior operator and the soft covering upper
approximation become closure operator. Also some new methods for generating
topologies are obtained. Finally, we study the relationship between concepts
of topology and soft covering lower and soft covering upper approximations.%
\newline

\noindent \textbf{Keywords :} Soft set, rough set, soft covering based rough
set, topology

2010 Mathematics Subject Classification: 54A0J, 68T30, 68T37, 03B52
\end{abstract}

\section{\protect\bigskip Introduction}

Mathematics is based on exact concepts and there is not vagueness for
mathematical concepts. For this reason researchers need to define some new
concepts for vagueness. The most successful approach is exactly Zadeh's
fuzzy set \cite{n} which is based on membership function. This theorical
approach is used in several areas as engineering, medicine, economics and
etc. Pawlak \cite{5} initiated rough set theory\ in 1982 as a tool for
uncertainty and imprecise data. The theory is based on partition or
equivalence relation, which is rather strict. Covering based rough set \cite%
{2,m} is an important extension of rough sets. Compared with rough sets, it
often gives a more reasonable description to a subset of the universe. In
recent years, covering based rough set theory has attracted more attentions.
The studies of Zhu et al. \cite{10,11,12,13} are fundamental and
significant. In 1999 Molodtsov \cite{4} gave soft set theory as a new tool
for vagueness and showed in his paper that soft set theory can be applied to
several areas. The hybrid models like fuzzy soft set \cite{8}, rough soft
set \cite{3}, soft rough set \cite{3} took attention from researchers. Feng
et al. investigated the concept of soft rough set \cite{3} which is a
combination of soft set and rough set. It is known that the equivalence
relation is used to form the granulation structure of the universe in the
rough set model and also the soft set is used to form the granulation
structure of the universe in the soft rough set model.

Topology is a branch of mathematics, whose concepts exist not only in almost
all branches of mathematics, but also in many real life applications.
Topology is also a mathematical tool to study rough sets \cite{c,d,e,f,g}.
It should be noted that the generation of topology by relation and the
representation of topological concepts via relation will narrow the gap
between topology and its applications.

The remaining part of this paper is arranged as follows:

In section 3, we give a new concept called as soft covering based rough sets
and its basic properties. Also we investigate the conditions under which the
soft covering lower and upper approximation operations are also interior and
closure operators, respectively. In section 4, we discuss methods of setting
up topology in soft covering approximation space. The relationship between
concepts of topology and soft covering lower and upper approximations are
studied in section 5. and the special condition of soft covering
approximation space is investigated in section 6.

\section{Preliminaries}

In this section, we introduce the fundamental ideas behind rough sets, soft
sets and topological spaces.

First, we recall some concepts and properties of the Pawlak's rough sets.

\begin{definition}
\cite{5} Let $U$ be a finite set and $R$ be an equivalence relation on $U$.
Then the pair $(U,R)$ is called a Pawlak approximation space. $R$ generates
a partition $U/R=\{Y_{1},Y_{2},...,Y_{m}\}$ on $U$ \bigskip where $%
Y_{1},Y_{2},...,Y_{m}$ are the equivalence classes generated by the
equivalence relation $R$. In the rough set theory, these are also called
elementary sets of $R$. For any $X\subseteq U$, we can describe $X$ by the
elementary sets of $R$ and the two sets:%
\begin{align*}
R_{-}(X) & =\cup\{Y_{i}\in U/R:Y_{i}\subseteq X\}, \\
R^{-}(X) & =\cup\{Y_{i}\in U/R:Y_{i}\cap X\neq\emptyset\}
\end{align*}
which are called the lower and the upper approximation of $X$, respectively.
In addition,%
\begin{align*}
POS_{R}(X) & =R_{-}(X), \\
NEG_{R}(X) & =U-R^{-}(X), \\
BND_{R}(X) & =R^{-}(X)-R_{-}(X)
\end{align*}
are called the positive, negative and boundary regions of $X$, respectively.
\end{definition}

Now, we are ready to give the definition of rough sets:

\begin{definition}
\cite{7} Let $(U,R)$ be a Pawlak approximation space. A subset $X\subseteq U$
is called definable (crisp) if $R_{-}(X)=R^{-}(X)$; in the opposite case,
i.e., if $BND_{R}(X)\neq\emptyset$, $X$ is said to be rough(or inexact). Any
pair of the form $R(X)=(R_{-}(X),R^{-}(X))$ is called a rough set of $X$.
\end{definition}

Let $U$ be an initial universe set and $E$ be the set of all possible
parameters with respect to $U$. Usually, parameters are attributes,
characteristics or properties of the objects in $U$. The notion of a soft
set is defined as follows:

\begin{definition}
\cite{4} A pair $G=(F,A)$ is called a soft set over $U$, where $A\subseteq E$
and $F:A\longrightarrow P(U)$ is a set-valued mapping.
\end{definition}

\begin{theorem}
\cite{1} Every rough set may be considered as a soft set.
\end{theorem}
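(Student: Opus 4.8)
The plan is to exhibit, from the data of a rough set, a canonical soft set that faithfully encodes it. Recall from Definition 1.4 that a soft set over $U$ is a pair $G=(F,A)$ with $A\subseteq E$ and $F:A\to P(U)$; so to "consider a rough set as a soft set" I must specify a parameter set $A$ and a set-valued map $F$ whose data is exactly the pair $(R_{-}(X),R^{-}(X))$ attached to $X$ in a Pawlak approximation space $(U,R)$.

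First I would fix a Pawlak approximation space $(U,R)$ and a subset $X\subseteq U$, so that the rough set in question is $R(X)=(R_{-}(X),R^{-}(X))$. Since the lower approximation is always contained in the upper approximation, $R_{-}(X)\subseteq R^{-}(X)$, the natural move is to take a two-element parameter set, say $A=\{R_{-},R^{-}\}\subseteq E$ (one parameter naming "lower", one naming "upper"), and define $F:A\to P(U)$ by $F(R_{-})=R_{-}(X)$ and $F(R^{-})=R^{-}(X)$. Then $G=(F,A)$ is by construction a soft set over $U$, and it carries precisely the information of the rough set $R(X)$; one recovers $R_{-}(X)$ and $R^{-}(X)$ as the images of the two parameters. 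I would also remark that the assignment $R(X)\mapsto G$ is injective: distinct rough sets give soft sets differing on at least one parameter value, so the correspondence is a genuine embedding of rough sets into soft sets. An alternative, if one prefers a single-parameter presentation, is to note that a rough set is equivalent to the nested pair $R_{-}(X)\subseteq R^{-}(X)$ and encode it via the partition blocks; but the two-parameter construction is the cleanest.

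The only subtlety — the part that deserves a sentence of care rather than a hard computation — is checking that this is the intended reading of the (admittedly informal) statement: namely that "is considered as" means "determines, and is determined by, a soft set via a canonical injection," and that the chosen $A$ legitimately sits inside the ambient parameter universe $E$ (which we may assume large enough, or simply enlarge $E$ to contain the two formal symbols). There is no real obstacle here: once the map $F$ is written down, verifying the defining property of Definition 1.4 is immediate because $P(U)$ is exactly where $R_{-}(X)$ and $R^{-}(X)$ live. I would close by noting that this justifies treating subsequent soft-set constructions as simultaneously generalizing the rough-set ones, which is the role this theorem plays in motivating the soft covering based rough sets of Section 3.
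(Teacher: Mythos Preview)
The paper does not actually supply a proof of this theorem; it is quoted from reference~\cite{1} (Akta\c{s} and \c{C}a\u{g}man) without argument, so there is no in-paper proof to compare against. That said, your construction---taking a two-element parameter set $A=\{R_{-},R^{-}\}$ and defining $F(R_{-})=R_{-}(X)$, $F(R^{-})=R^{-}(X)$---is correct and is precisely the argument given in the cited source, so your proposal both fills the gap and agrees with the original proof.
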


\ \ The following result indicates that soft sets and binary relations are
closely related.

\begin{theorem}
\cite{3} Let $G=(F,A)$ be a soft set over $U$. Then $G$ induces a binary
relation $R_{G}\subseteq A\times U$, which is defined by%
\begin{equation*}
(x,y)\in R_{G}\Longleftrightarrow y\in F(x)
\end{equation*}
where $x\in A$, $y\in U$. Conversely, assume that $R$ is a binary relation
from $A$ to $U$. Define a set valued mapping $F_{R}:A\longrightarrow P(U)$ by%
\begin{equation*}
F_{R}(x)=\{y\in U:(x,y)\in R\}\text{,}
\end{equation*}
where $x\in A$. Then $G_{R}=(F_{R},A)$ is a soft set over $U$. Moreover, it
is seen that $G_{R_{G}}=G$ and $R_{G_{R}}=R$.
\end{theorem}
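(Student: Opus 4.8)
The plan is to verify the three assertions in the order they are stated: that $R_{G}$ is genuinely a binary relation from $A$ to $U$, that $G_{R}$ is genuinely a soft set over $U$, and finally that the two assignments $G \mapsto R_{G}$ and $R \mapsto G_{R}$ are mutually inverse. The first two points are immediate from the definitions. The object $R_{G}$ is specified as a subset of $A \times U$ by an explicit membership condition, so there is nothing to check. For any relation $R \subseteq A \times U$, the rule $F_{R}(x) = \{y \in U : (x,y) \in R\}$ assigns to each $x \in A$ a subset of $U$, hence determines a bona fide map $F_{R} : A \longrightarrow P(U)$; by Definition (of soft set) the pair $(F_{R},A)$ is therefore a soft set over $U$.

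The real content, such as it is, lies in the two identities. To prove $G_{R_{G}} = G$, I would first observe that both sides are soft sets with the same parameter set $A$, so it suffices to show $F_{R_{G}}(x) = F(x)$ for every $x \in A$. Unwinding the definitions, $F_{R_{G}}(x) = \{y \in U : (x,y) \in R_{G}\}$, and $(x,y) \in R_{G}$ holds precisely when $y \in F(x)$; hence $F_{R_{G}}(x) = \{y \in U : y \in F(x)\} = F(x)$, as desired. To prove $R_{G_{R}} = R$, I would argue directly on ordered pairs: for $x \in A$ and $y \in U$ we have $(x,y) \in R_{G_{R}}$ iff $y \in F_{R}(x)$ iff $y \in \{z \in U : (x,z) \in R\}$ iff $(x,y) \in R$. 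Since this equivalence holds for every such pair, the two relations coincide.

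There is essentially no obstacle: every step is a direct translation between the ``set-valued map'' description and the ``relation'' description of the same data. The only care required is bookkeeping — when comparing $G_{R_{G}}$ with $G$ one must note first that both have parameter set $A$ before reducing to an equality of associated set-valued maps, and when comparing $R_{G_{R}}$ with $R$ one must recall that two subsets of $A \times U$ are equal exactly when they contain the same ordered pairs. Once these observations are in place, the two chains of equivalences above close the argument, and the stated bijective correspondence between soft sets over $U$ with parameter set $A$ and binary relations from $A$ to $U$ follows.
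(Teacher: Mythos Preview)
Your proof is correct and complete: each step is a straightforward unwinding of the definitions, and the bookkeeping you flag (matching parameter sets before comparing the set-valued maps, and comparing relations elementwise) is exactly what is needed. Note, however, that the paper does not actually supply its own proof of this statement; it is quoted in the Preliminaries section as a result from \cite{3}, so there is no in-paper argument to compare your approach against.
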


\begin{definition}
\cite{3} Let $G=(F,A)$ be a soft set over $U$. Then the pair $S=(U,R_{G})$
is called a soft approximation space.
\end{definition}

\section{ Soft Covering Based Rough Sets}

We know that a soft set is determined by the set-valued mapping from a set
of parameters to the powerset of the universe. In this section, we will use
a special soft set and by using this soft set, we will establish a soft
covering approximation space.

\begin{definition}
\cite{3} A soft set $G=(F,A)$ over $U$ is called a full soft set if $%
\dbigcup \limits_{a\in A}F(a)=U$.
\end{definition}

\begin{definition}
\cite{3} A full soft set $G=(F,E)$ over $U$ is called a covering soft set if 
$F(e)\neq \emptyset ,\ \forall e\in E$.
\end{definition}

We discussed some properties of soft covering upper and lower approximations
in our previous work \cite{9}. Following definitions are given in this paper.

\begin{definition}
Let $G=(F,E)$ be a covering soft set over $U$. The ordered pair $S=(U,C_{G})$
is called a soft covering approximation space.
\end{definition}

\begin{definition}
Let $S=(U,C_{G})$ be a soft covering approximation space, for any $x\in U$,
the soft minimal description of $x$ is defined as following:%
\begin{equation*}
Md_{S}(x)=\{F(e):e\in E\wedge x\in F(e)\wedge (\forall a\in E\wedge x\in
F(a)\subseteq F(e)\Longrightarrow F(a)=F(e))\}\text{.}
\end{equation*}
\end{definition}

\begin{definition}
Let $S=(U,C_{G})$ be a soft covering approximation space. For a set $%
X\subseteq U$, the soft covering lower and upper approximations are
respectively defined as%
\begin{align*}
S_{-}(X)& =\cup \{F(e):e\in E\wedge F(e)\subseteq X\} \\
S^{-}(X)& =S_{-}(X)\cup \{Md_{S}(x):x\in X-S_{-}(X)\}\text{.}
\end{align*}%
In addition,%
\begin{align*}
POS_{S}(X)& =S_{-}(X) \\
NEG_{S}(X)& =U-S^{-}(X) \\
BND_{S}(X)& =S^{-}(X)-S_{-}(X)
\end{align*}%
are called the soft covering positive, negative and boundary regions of $X$,
respectively.
\end{definition}

\begin{definition}
Let $S=(U,C_{G})$ be a soft covering approximation space. A subset $%
X\subseteq U$ is called definable if $S_{-}(X)=S^{-}(X)$; in the opposite
case, i.e., if $S_{-}(X)\neq S^{-}(X)$, $X$ is said to be soft covering
based rough set. The pair $(S_{-}(X),S^{-}(X))$ is called soft covering
based rough set of $X$ and it is showed \ that $X=(S_{-}(X),S^{-}(X))$.
\end{definition}

\begin{example}
Let $S=(U,C_{G})$ be a soft covering approximation space, where $%
U=\{a,b,c,d,e,f,g,h\},\ E=\{e_{1},e_{2},e_{3},e_{4},e_{5}\},\
F(e_{1})=\{a,b\},\ F(e_{2})=\{b,c,d\},\ F(e_{3})=\{e,f\},\ F(e_{4})=\{g\}$
and $F(e_{5})=\{g,h\}$. For $X_{1}=\{a,b,c\}\subseteq U$, since $%
S_{-}(X_{1})\neq S^{-}(X_{1})$, $X_{1}$ is a soft covering based rough set.
For $X_{2}=\{e,f,g\}\subseteq U$, since $S_{-}(X_{2})=S^{-}(X_{2})$, $X_{2}$
is a definable set.
\end{example}

We give following two theorems in our previous work \cite{9}.

\begin{theorem}
Let $G=(F,E)$ be a soft set over $U$, $S=(U,C_{G})$ be a soft covering
approximation space and $X,Y\subseteq U$. Then the soft covering lower and
upper approximations have the following properties:
\end{theorem}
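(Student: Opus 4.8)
The plan is to obtain every listed clause directly from the defining formulas for $S_{-}$ and $S^{-}$, so that the whole proof reduces to elementary bookkeeping about the family $C_{G}=\{F(e):e\in E\}$. Two preliminary facts carry most of the argument. First, since $S=(U,C_{G})$ is a soft covering approximation space, $G=(F,E)$ is a covering soft set, so $C_{G}$ covers $U$ and every $F(e)$ is nonempty; hence $S_{-}(X)$ is exactly the union of the blocks of $C_{G}$ that lie inside $X$, and it is the largest subset of $X$ expressible as such a union. Second, for every $x\in U$ the soft minimal description $Md_{S}(x)$ is nonempty: $x$ belongs to some $F(e)$ because $C_{G}$ covers $U$, and as $U$ is finite there is a $\subseteq$-minimal block among those containing $x$; moreover every member of $Md_{S}(x)$ contains $x$, so $x\in\bigcup Md_{S}(x)$.

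With these in hand the routine clauses fall out. From the first fact, $S_{-}(\emptyset)=\emptyset$ (no nonempty block sits inside $\emptyset$) and $S_{-}(U)=U$ (the blocks already exhaust $U$); substituting $X=\emptyset$ and $X=U$ into $S^{-}(X)=S_{-}(X)\cup\bigcup\{Md_{S}(x):x\in X-S_{-}(X)\}$ gives $S^{-}(\emptyset)=\emptyset$ and $S^{-}(U)=U$, since the index set $X-S_{-}(X)$ is empty in both cases. The inclusion $S_{-}(X)\subseteq X$ is immediate from the definition, and for $X\subseteq S^{-}(X)$ I take $x\in X$ and split into the cases $x\in S_{-}(X)$ and $x\notin S_{-}(X)$, the latter handled by the second fact. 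Monotonicity of the lower approximation is equally direct: if $X\subseteq Y$ then every block $F(e)\subseteq X$ also satisfies $F(e)\subseteq Y$, so $S_{-}(X)\subseteq S_{-}(Y)$; the same observation yields the one-sided inclusions $S_{-}(X\cap Y)\subseteq S_{-}(X)\cap S_{-}(Y)$ and $S_{-}(X)\cup S_{-}(Y)\subseteq S_{-}(X\cup Y)$, with equality failing in general because an intersection of two blocks of $C_{G}$ need not be a block. Idempotence of the lower approximation, $S_{-}(S_{-}(X))=S_{-}(X)$, uses that $S_{-}(X)$ is itself a union of blocks, so each of those blocks is recaptured on the second pass, giving $\supseteq$, while $\subseteq$ is the inclusion just proved.

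The delicate part, which I would handle last, is anything that involves the correction term $\bigcup\{Md_{S}(x):x\in X-S_{-}(X)\}$, above all the idempotence $S^{-}(S^{-}(X))=S^{-}(X)$: because the index set $X-S_{-}(X)$ is not monotone in $X$, one cannot simply quote monotonicity of $S^{-}$, which need not hold for this style of upper approximation. The strategy there is to verify, first, that for $x\in X-S_{-}(X)$ and $F(e)\in Md_{S}(x)$ the block $F(e)$ lies inside $S^{-}(X)$, and, second, that a further application of $S^{-}$ adds nothing new: a point left uncovered by $S_{-}(S^{-}(X))$ must already have been uncovered inside $X$, and by the very definition of $Md_{S}$ its $\subseteq$-minimal witnessing block is unchanged, hence already present. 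The finiteness of $U$ together with the minimality encoded in $Md_{S}$ are exactly the ingredients that make this chase close, and getting that bookkeeping right is where I expect the main obstacle to be.
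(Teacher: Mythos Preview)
The paper does not actually prove this theorem; it simply records the eight properties and cites the authors' earlier work, so there is no ``paper's proof'' to compare against. Your direct verification from the definitions is the natural route and is correct for items 1--5; items 7 and 8, which you do not mention, are immediate once you observe that $F(e)\subseteq F(e)$ forces $S_{-}(F(e))=F(e)$ and hence the correction term in $S^{-}(F(e))$ is indexed by the empty set.

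Your treatment of item 6, however, is both more complicated than necessary and contains a gap. You propose to argue that any $y\in S^{-}(X)-S_{-}(S^{-}(X))$ ``must already have been uncovered inside $X$,'' i.e.\ lie in $X-S_{-}(X)$; but such a $y$ need not lie in $X$ at all---it could be a point of some $F(e)\in Md_{S}(x)$ with $y\notin X$---so the chase you outline does not close as stated. The clean argument is the one you already have in hand but do not exploit: since $S_{-}(X)$ is a union of blocks and each $Md_{S}(x)$ is a family of blocks, $S^{-}(X)$ is itself a union of elements of $C_{G}$. Therefore $S_{-}(S^{-}(X))=S^{-}(X)$, the index set $S^{-}(X)-S_{-}(S^{-}(X))$ is empty, and $S^{-}(S^{-}(X))=S^{-}(X)$ follows in one line. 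No minimality bookkeeping is needed, and the finiteness you invoke is used only to guarantee $Md_{S}(x)\neq\emptyset$ for item 3, not here.
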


\begin{enumerate}
\item $S_{-}(U)=S^{-}(U)=U$

\item $S_{-}(\emptyset )=S^{-}(\emptyset )=\emptyset $

\item $S_{-}(X)\subseteq X\subseteq S^{-}(X)$

\item $X\subseteq Y\Longrightarrow S_{-}(X)\subseteq S_{-}(Y)$

\item $S_{-}(S_{-}(X))=S_{-}(X)$

\item $S^{-}(S^{-}(X))=S^{-}(X)$

\item $\forall e\in E,\ S_{-}(F(e))=F(e)$

\item $\forall e\in E,\ S^{-}(F(e))=F(e)$
\end{enumerate}

\begin{theorem}
Let $G=(F,E)$ be a soft set over $U$, $S=(U,C_{G})$ be a soft covering
approximation space and $X,Y\subseteq U$. Then the soft covering lower and
upper approximations do not have the following properties:
\end{theorem}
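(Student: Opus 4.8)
The plan is to establish this "negative" theorem not by abstract argument but by exhibiting explicit counterexamples, since a statement of the form "the operations do \emph{not} have the following properties" is proved precisely by producing a single soft covering approximation space and sets $X,Y$ for which each listed identity or inclusion fails. The natural vehicle is the space already introduced in Example~3.8, namely $U=\{a,b,c,d,e,f,g,h\}$ with $F(e_1)=\{a,b\}$, $F(e_2)=\{b,c,d\}$, $F(e_3)=\{e,f\}$, $F(e_4)=\{g\}$, $F(e_5)=\{g,h\}$, so that the reader can reuse familiar computations; if some property needs a different witness, I would introduce a small auxiliary space rather than complicate this one.

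First I would list the candidate properties that one might naively expect by analogy with Pawlak approximations or with topological interior/closure: (i) $S^{-}(X\cup Y)=S^{-}(X)\cup S^{-}(Y)$; (ii) $S_{-}(X\cap Y)=S_{-}(X)\cap S_{-}(Y)$; (iii) $X\subseteq Y\Rightarrow S^{-}(X)\subseteq S^{-}(Y)$ (monotonicity of the upper operator); (iv) $S_{-}(X^{c})=(S^{-}(X))^{c}$ and the dual $S^{-}(X^{c})=(S_{-}(X))^{c}$ (duality); (v) $S^{-}(S_{-}(X))=S_{-}(X)$ and $S_{-}(S^{-}(X))=S^{-}(X)$; (vi) $S_{-}(X)\cup S_{-}(Y)=S_{-}(X\cup Y)$. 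For each one I would compute both sides in the Example~3.8 space on a carefully chosen small pair of sets. For instance, to break monotonicity of $S^{-}$ take $X=\{b\}$ and $Y=\{b,c,d\}=F(e_2)$: then $S^{-}(Y)=Y$ by property~8 of the previous theorem, whereas $Md_S(b)$ contains both $\{a,b\}$ and $\{b,c,d\}$, so $S^{-}(\{b\})=\{a,b,c,d\}\not\subseteq Y$. To break the union law for $S^{-}$, one typically uses two singletons whose minimal descriptions overlap a common block. Duality should fail because $S_{-}$ is defined via unions of blocks contained in $X$ while $S^{-}$ is patched with minimal descriptions, an asymmetric construction.

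The key steps, in order, are: (1) state the enumerated list of properties that fail; (2) fix the witnessing space (Example~3.8) and record the values $Md_S(x)$ for the relevant $x$; (3) for each listed property, display the computation of the two sides and point out the strict discrepancy; (4) conclude. Each individual verification is a finite, routine set computation, so there is no deep obstacle — the work is bookkeeping. The main place where care is needed is choosing the sets $X,Y$ economically so that a \emph{single} well-chosen example simultaneously refutes as many of the listed properties as possible, and double-checking the minimal-description sets $Md_S(x)$, since an error there propagates into every upper-approximation value. In particular I expect $X=\{b\}$ (whose minimal description is not a single block) to be the workhorse counterexample for the failures involving $S^{-}$, and a pair like $X=\{a,b\}$, $Y=\{b,c,d\}$ with $X\cap Y=\{b\}$ to handle the failure of $S_{-}(X\cap Y)=S_{-}(X)\cap S_{-}(Y)$, since $S_{-}(X)=\{a,b\}$, $S_{-}(Y)=\{b,c,d\}$ give intersection $\{b\}$ while $S_{-}(\{b\})=\emptyset$.
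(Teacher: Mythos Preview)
Your overall strategy --- refute each listed identity by an explicit soft covering space and specific sets $X,Y$ --- is exactly what the paper does, and your sample computations (monotonicity via $X=\{b\}\subseteq Y=F(e_2)$, and the intersection law via $X=F(e_1)$, $Y=F(e_2)$) are correct. Two points, however, need repair.

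First, and most importantly, your guessed list of properties is not the paper's list, and your item~(v) is actually \emph{true}, so you could not produce a counterexample. Both $S_{-}(X)$ and $S^{-}(X)$ are, by definition, unions of blocks $F(e)$; hence by Proposition~19 one has $S_{-}(S^{-}(X))=S^{-}(X)$ and $S^{-}(S_{-}(X))=S_{-}(X)$ for every $X$. The two properties that the paper lists in place of your~(v) are the \emph{complement} statements
\[
S_{-}(-S_{-}(X))=-S_{-}(X)\qquad\text{and}\qquad S^{-}(-S^{-}(X))=-S^{-}(X),
\]
and these do fail (the complement of a union of blocks need not be a union of blocks). Your item~(vi) is also not on the paper's list. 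So before choosing witnesses you must fix the enumeration to match properties 1--7 as stated after Theorem~15.

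Second, the paper does not reuse the eight-point space of Example~13; it introduces a purpose-built seven-point space $U=\{a,\dots,g\}$ with $F(e_1)=\{a,b,c\}$, $F(e_2)=\{b,c,d\}$, $F(e_3)=\{d,e\}$, $F(e_4)=\{f,g\}$, and then uses three short examples: $X=\{a,b,c,d\}$, $Y=\{d,e\}$ disposes of properties 1,\,4,\,5,\,6,\,7; $X=\{a,b\}$, $Y=\{c,d\}$ kills property~2; and $X=\{d\}\subseteq Y=\{b,c,d\}$ kills property~3. Your Example~13 space would also work once the correct list is targeted, but the paper's choice lets a single pair handle five of the seven properties at once.
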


\begin{enumerate}
\item $S_{-}(X\cap Y)=S_{-}(X)\cap S_{-}(Y)$

\item $S^{-}(X\cup Y)=S^{-}(X)\cup S^{-}(Y)$

\item $X\subseteq Y\Longrightarrow S^{-}(X)\subseteq S^{-}(Y)$

\item $S_{-}(X)=-(S^{-}(-X))$

\item $S^{-}(X)=-(S_{-}(-X))$

\item $S_{-}(-S_{-}(X))=-S_{-}(X)$

\item $S^{-}(-S^{-}(X))=-S^{-}(X)$
\end{enumerate}

The symbol "-" denotes the complement of the set. The following examples
show that the equalities mentioned above do not hold.

\begin{example}
Let $S=(U,C_{G})$ be a soft covering approximation space, where $%
U=\{a,b,c,d,e,f,g\},\ E=\{e_{1},e_{2},e_{3},e_{4}\},\ F(e_{1})=\{a,b,c\},\
F(e_{2})=\{b,c,d\},\ F(e_{3})=\{d,e\}$ and $F(e_{4})=\{f,g\}$. Suppose that $%
X=\{a,b,c,d\}\subseteq U$ and $Y=\{d,e\}.$ The properties 1, 4, 5, 6, 7 of
Theorem 15 do not hold.
\end{example}

\begin{example}
Let $S=(U,C_{G})$ be a soft covering approximation space and $(F,E)$ be a
soft set given in the Example 16. Suppose that $X=\{a,b\}\subseteq U$ and $%
Y=\{c,d\}\subseteq U$. The property 2 of Theorem 15 does not hold.
\end{example}

\begin{example}
Let $S=(U,C_{G})$ be a soft covering approximation space and $(F,E)$ be a
soft set given in the Example 16. Suppose that $X=\{d\}\subseteq U$ and $%
Y=\{b,c,d\}\subseteq U$. The property 3 of Theorem 15 does not hold.
\end{example}

Now, we consider under which conditions soft covering lower and upper
approximations satisfy properties 1, 2, 3 of Theorem 15.

The continuation of the paper, the parameter set $E$ is supposed to be
finite.

\begin{proposition}
$S_{-}(X)=X$ if and only if $X$ is a union of some elements of $C_{G}$.
Similarly, $S^{-}(X)=X$ if and only if $X$ is a union of some elements of $%
C_{G}$.
\end{proposition}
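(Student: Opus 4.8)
The plan is to prove both biconditionals by exploiting the structure already recorded in Theorem~14 and the definitions of $S_{-}$ and $S^{-}$. I would begin with the lower approximation. For the forward direction, suppose $S_{-}(X)=X$. By definition, $S_{-}(X)=\cup\{F(e):e\in E\wedge F(e)\subseteq X\}$, so $X$ is exhibited directly as a union of elements of the covering $C_{G}$ (namely those $F(e)$ contained in $X$), which is exactly the claim. For the converse, suppose $X=\cup_{i}F(e_{i})$ for some family of parameters $e_{i}\in E$. Each such $F(e_{i})$ satisfies $F(e_{i})\subseteq X$, hence $F(e_{i})\subseteq S_{-}(X)$ by definition, so $X=\cup_{i}F(e_{i})\subseteq S_{-}(X)$; combined with $S_{-}(X)\subseteq X$ from property~3 of Theorem~14, this gives $S_{-}(X)=X$.

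For the upper approximation I would argue that $S^{-}(X)=X$ forces $S_{-}(X)=X$, which then reduces to the case already handled. Indeed, from property~3 of Theorem~14 we have $S_{-}(X)\subseteq X\subseteq S^{-}(X)$, so if $S^{-}(X)=X$ then $S_{-}(X)\subseteq X = S^{-}(X)$; but also by the definition $S^{-}(X)=S_{-}(X)\cup\{Md_{S}(x):x\in X-S_{-}(X)\}$, and I would show that the only way this union can collapse back to $S_{-}(X)$ — equivalently, back to $X$ — is for the index set $X-S_{-}(X)$ to be empty, i.e. $S_{-}(X)=X$. The conclusion then follows from the lower-approximation case. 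Conversely, if $X$ is a union of elements of $C_{G}$, then by the first part $S_{-}(X)=X$, so $X-S_{-}(X)=\emptyset$ and therefore $S^{-}(X)=S_{-}(X)=X$ directly from the defining formula.

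The main obstacle is the step showing that $X-S_{-}(X)\neq\emptyset$ actually prevents $S^{-}(X)$ from equaling $X$ — one must verify that adjoining the soft minimal descriptions $Md_{S}(x)$ for points $x\notin S_{-}(X)$ genuinely enlarges the set beyond $X$, or at least beyond $S_{-}(X)$, rather than merely re-adding points of $X$. Here I would use that for $x\in X-S_{-}(X)$, any $F(e)\in Md_{S}(x)$ contains $x$ but is \emph{not} a subset of $X$ (otherwise $F(e)\subseteq S_{-}(X)$, contradicting $x\notin S_{-}(X)$), so $Md_{S}(x)$ contributes at least one element lying outside $X$; this shows $S^{-}(X)\supsetneq X$ strictly whenever $X-S_{-}(X)$ is nonempty, which is precisely what is needed. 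The remaining arguments are routine set-chasing using only the definitions and Theorem~14.
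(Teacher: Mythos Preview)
Your argument is correct and complete. The paper states Proposition~19 without proof, so there is nothing to compare against; your approach---directly unpacking the definition for $S_{-}$, and for $S^{-}$ reducing to the lower case by observing that any $F(e)\in Md_{S}(x)$ with $x\in X-S_{-}(X)$ must contain a point outside $X$---is exactly the natural route and uses only the definitions and Theorem~14 as intended. The one point you might make fully explicit is that $Md_{S}(x)$ is nonempty for each $x\in U$, which holds because $C_{G}$ covers $U$ and $E$ is assumed finite in the paper; this guarantees the strict enlargement $S^{-}(X)\supsetneq X$ you need.
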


\begin{theorem}
Let $S=(U,C_{G})$ be a soft covering approximation space and $X,Y\subseteq U$%
. $S_{-}(X\cap Y)=S_{-}(X)\cap S_{-}(Y)$ if and only if $\forall
e_{1},e_{2}\in E,\ F(e_{1})\cap F(e_{2})$ is a finite union of elements of $%
C_{G}$.
\end{theorem}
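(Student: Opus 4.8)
The plan is to prove both implications using Proposition~21, which characterizes the fixed points of $S_{-}$ and $S^{-}$ as exactly the unions of elements of $C_{G}$ (here $C_{G}$ is the covering $\{F(e):e\in E\}$).

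For the \emph{sufficiency} direction, assume that $F(e_{1})\cap F(e_{2})$ is a finite union of elements of $C_{G}$ for all $e_{1},e_{2}\in E$. We always have $S_{-}(X\cap Y)\subseteq S_{-}(X)\cap S_{-}(Y)$ by property~3 and property~4 of Theorem~14 (monotonicity applied to $X\cap Y\subseteq X$ and $X\cap Y\subseteq Y$), so only the reverse inclusion needs work. Take $z\in S_{-}(X)\cap S_{-}(Y)$. By definition of $S_{-}$, there exist $e_{1},e_{2}\in E$ with $z\in F(e_{1})\subseteq X$ and $z\in F(e_{2})\subseteq Y$; hence $z\in F(e_{1})\cap F(e_{2})\subseteq X\cap Y$. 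By hypothesis $F(e_{1})\cap F(e_{2})=\bigcup_{i} F(a_{i})$ for finitely many $a_{i}\in E$, and $z$ lies in some $F(a_{i})$ with $F(a_{i})\subseteq F(e_{1})\cap F(e_{2})\subseteq X\cap Y$. Therefore $z\in S_{-}(X\cap Y)$, giving the missing inclusion and the equality.

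For the \emph{necessity} direction, assume $S_{-}(X\cap Y)=S_{-}(X)\cap S_{-}(Y)$ for all $X,Y\subseteq U$. Fix arbitrary $e_{1},e_{2}\in E$ and specialize to $X=F(e_{1})$, $Y=F(e_{2})$. By property~7 of Theorem~14 we have $S_{-}(F(e_{1}))=F(e_{1})$ and $S_{-}(F(e_{2}))=F(e_{2})$, so the hypothesis gives $S_{-}(F(e_{1})\cap F(e_{2}))=F(e_{1})\cap F(e_{2})$. By Proposition~21 this means $F(e_{1})\cap F(e_{2})$ is a union of some elements of $C_{G}$, and since $E$ is finite (as stipulated just before the statement), it is a finite union of elements of $C_{G}$. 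As $e_{1},e_{2}$ were arbitrary, this completes the proof.

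I expect the sufficiency direction to be the only place requiring genuine care: one must resist the temptation to argue elementwise without producing an actual member $F(a_{i})$ of the covering sitting below $X\cap Y$ and containing $z$, since $S_{-}$ is defined as a union of covering elements, not via minimal descriptions. The necessity direction is essentially a one-line application of Proposition~21 together with the fixed-point identity $S_{-}(F(e))=F(e)$, and the finiteness of $E$ is exactly what upgrades ``union'' to ``finite union.''
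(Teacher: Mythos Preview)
Your proof is correct and follows essentially the same approach as the paper's: both directions specialize to $X=F(e_{1})$, $Y=F(e_{2})$ for necessity and exploit the hypothesis to push $F(e_{1})\cap F(e_{2})$ inside $S_{-}(X\cap Y)$ for sufficiency, the only cosmetic difference being that the paper writes out the finite union decompositions $S_{-}(X)=\bigcup_{i}F(e_{i})$, $S_{-}(Y)=\bigcup_{j}F(e_{j}')$ explicitly rather than arguing elementwise. One small slip: the fixed-point characterization you invoke is Proposition~19 in the paper's numbering, not Proposition~21.
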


\begin{proof}
$\Longrightarrow:$ Since $F(e_{1})\cap F(e_{2})=S_{-}(F(e_{1}))\cap
S_{-}(F(e_{2}))=S_{-}(F(e_{1})\cap F(e_{2}))$ and $S_{-}(F(e_{1})\cap
F(e_{2}))$ is a finite union of elements of $C_{G}$, $F(e_{1})\cap F(e_{2})$
is a finite union of elements of $C_{G}$.

$\Longleftarrow :$ By 4 of Theorem 14, it is easy to see that $S_{-}(X\cap
Y)\subseteq S_{-}(X)\cap S_{-}(Y)$. Now we shall show that $S_{-}(X)\cap
S_{-}(Y)\subseteq S_{-}(X\cap Y)$. Let $S_{-}(X)=F(e_{1})\cup F(e_{2})\cup
...\cup F(e_{m})$ and $S_{-}(Y)=F(e_{1}^{^{\prime }})\cup F(e_{2}^{^{\prime
}})\cup ...\cup F(E_{n}^{^{\prime }})$ where $e_{i},\ e_{j}^{\prime }\in E,\
1\leq i\leq m,\ 1\leq j\leq n$. For any $1\leq i\leq m$ and $1\leq j\leq n,\
F(e_{i})\cap F(e_{j}^{^{\prime }})\subseteq X\cap Y$ and $F(e_{i})\cap
F(e_{j}^{^{\prime }})$ is a finite union of elements of $C_{G}$, let us say $%
F(e_{i})\cap F(e_{j}^{^{\prime }})=F(p_{1})\cup ...\cup F(p_{l})$ where $%
F(p_{h})\in C_{G},\ 1\leq h\leq l$, so $F(p_{h})\subseteq S_{-}(X\cap Y)$
for $1\leq h\leq l$. Thus $F(e_{i})\cap F(e_{j}^{^{\prime }})\subseteq
S_{-}(X\cap Y)$ for $1\leq i\leq m$ and $1\leq j\leq n$. From $S_{-}(X)\cap
S_{-}(Y)=\dbigcup\limits_{i=1}^{m}\dbigcup\limits_{j=1}^{n}\left[
F(e_{i})\cap F(e_{j}^{^{\prime }})\right] $, hence $S_{-}(X)\cap
S_{-}(Y)\subseteq S_{-}(X\cap Y)$.
\end{proof}

\begin{theorem}
Let $S=(U,C_{G})$ be a soft covering approximation space and $X,Y\subseteq U$%
. $X\subseteq Y\Longrightarrow S^{-}(X)\subseteq S^{-}(Y)$ if and only if $%
\forall e_{1},e_{2}\in E,\ F(e_{1})\cap F(e_{2})$ is a finite union of
elements of $C_{G}$.
\end{theorem}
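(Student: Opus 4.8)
The plan is to prove the two implications separately, in close parallel to the proof of Theorem 20. For $(\Rightarrow)$, I would assume that $X\subseteq Y$ implies $S^{-}(X)\subseteq S^{-}(Y)$ and fix $e_{1},e_{2}\in E$. From $F(e_{1})\cap F(e_{2})\subseteq F(e_{1})$ and $F(e_{1})\cap F(e_{2})\subseteq F(e_{2})$, the assumed monotonicity of $S^{-}$ together with property 8 of Theorem 14 (namely $S^{-}(F(e))=F(e)$) gives
\[
S^{-}(F(e_{1})\cap F(e_{2}))\subseteq S^{-}(F(e_{1}))\cap S^{-}(F(e_{2}))=F(e_{1})\cap F(e_{2}).
\]
The reverse inclusion is property 3 of Theorem 14, so $S^{-}(F(e_{1})\cap F(e_{2}))=F(e_{1})\cap F(e_{2})$, and Proposition 19 then forces $F(e_{1})\cap F(e_{2})$ to be a union of elements of $C_{G}$; as $E$, hence $C_{G}$, is finite, this union is finite. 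This half is short and routine.

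For $(\Leftarrow)$, I would assume the intersection condition, take $X\subseteq Y$, and work from the defining formula $S^{-}(X)=S_{-}(X)\cup\{Md_{S}(x):x\in X-S_{-}(X)\}$. The inclusion $S_{-}(X)\subseteq S_{-}(Y)\subseteq S^{-}(Y)$ is immediate from properties 4 and 3 of Theorem 14, so it remains to show that every $F(e)\in Md_{S}(x)$ is contained in $S^{-}(Y)$, for each $x\in X-S_{-}(X)$. I would split on whether $x$ lies in $S_{-}(Y)$. If $x\notin S_{-}(Y)$ then, since $x\in X\subseteq Y$, we have $x\in Y-S_{-}(Y)$, so $Md_{S}(x)$ is one of the pieces making up $S^{-}(Y)$ and there is nothing to prove. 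If $x\in S_{-}(Y)$, choose $F(e_{0})\in C_{G}$ with $x\in F(e_{0})\subseteq Y$ and let $F(e)\in Md_{S}(x)$; then $x\in F(e)\cap F(e_{0})$, and by hypothesis $F(e)\cap F(e_{0})$ is a finite union of elements of $C_{G}$, so some $F(p)\in C_{G}$ satisfies $x\in F(p)\subseteq F(e)\cap F(e_{0})\subseteq F(e)$. The minimality clause in the definition of $Md_{S}(x)$ forces $F(p)=F(e)$, hence $F(e)=F(p)\subseteq F(e_{0})\subseteq Y$, so $F(e)\subseteq S_{-}(Y)\subseteq S^{-}(Y)$. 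Collecting both cases gives $S^{-}(X)\subseteq S^{-}(Y)$.

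I expect the second case of $(\Leftarrow)$ to be the main obstacle: one must notice that the intersection hypothesis is precisely what allows a minimal-description block $F(e)$ of a point $x$ that already belongs to $S_{-}(Y)$ to be shrunk inside some $F(e_{0})\subseteq Y$ and thus absorbed into $S_{-}(Y)$. Without that hypothesis this step fails, which is consistent with the failure of property 3 of Theorem 15 witnessed by Example 18.
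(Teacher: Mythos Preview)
Your proposal is correct and follows essentially the same route as the paper: in $(\Rightarrow)$ you apply the assumed monotonicity of $S^{-}$ to the inclusions $F(e_{1})\cap F(e_{2})\subseteq F(e_{i})$ and invoke $S^{-}(F(e_{i}))=F(e_{i})$ and Proposition~19, and in $(\Leftarrow)$ you make the same case split on whether a point $x\in X-S_{-}(X)$ lies in $S_{-}(Y)$, using the intersection hypothesis plus the minimality clause of $Md_{S}(x)$ to absorb the relevant block into $S_{-}(Y)$. Your write-up is in fact a bit more explicit than the paper's about why minimality forces $F(p)=F(e)$.
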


\begin{proof}
$\Longrightarrow :$ $S^{-}(F(e_{1})\cap F(e_{2}))\subseteq
S^{-}(F(e_{1}))=F(e_{1})$ and $S^{-}(F(e_{1})\cap F(e_{2}))\subseteq
S^{-}(F(e_{2}))=F(e_{2})$, so $S^{-}(F(e_{1})\cap F(e_{2}))\subseteq
F(e_{1})\cap F(e_{2})$. By property 3 of Theorem 14, $F(e_{1})\cap
F(e_{2})\subseteq S^{-}(F(e_{1})\cap F(e_{2}))$, so $F(e_{1})\cap
F(e_{2})=S^{-}(F(e_{1})\cap F(e_{2}))$. Hence, $F(e_{1})\cap F(e_{2})$ is a
finite union of elements of $C_{G}$.

$\Longleftarrow :$ By the definition of soft covering upper approximation, $%
S^{-}(X)$ can be expressed as $S^{-}(X)=S_{-}(X)\cup F(e_{1})\cup ...\cup
F(e_{m})$ where $y_{i}\in F(e_{i})\nsubseteq X$ and $F(e_{i})\in
Md_{S}(y_{i})$ for some $y_{i}\in X-S_{-}(X),\ 1\leq i\leq m$. It is obvious
that $y_{i}\in Y$. If $y_{i}\in Y-S_{-}(Y)$, it is easy to see that $%
F(e_{i})\subseteq S^{-}(Y),\ 1\leq i\leq m$. If $y_{i}\notin Y-S_{-}(Y)$,
then $y_{i}\in S_{-}(Y)$. Thus, there exists a $F(e_{j})\in C_{G}$ such that 
$y_{i}\in F(e_{j})\subseteq S_{-}(Y)$. By the assumption of this Theorem, $%
F(e_{i})\cap F(e_{j})$ is a finite union of elements in $C_{G}$. Let us say $%
F(e_{i})\cap F(e_{j})=F(e_{1})\cup ...\cup F(e_{l})$ where $F(e_{h})\in
C_{G},\ 1\leq h\leq l$, so there exists $1\leq j\leq l$ such that $y_{i}\in
F(e_{j})$. Since $F(e_{i})\in Md_{S}(y_{i}),\ F(e_{i})=F(e_{j})$, thus $%
F(e_{i})\subseteq F(e_{j})$. Therefore, $F(e_{i})\subseteq S_{-}(Y)\subseteq
S^{-}(Y),\ 1\leq i\leq m$. From property 3 and property 4 of Theorem 14, $%
S_{-}(X)\subseteq S_{-}(Y)\subseteq S^{-}(Y)$, so $S^{-}(X)\subseteq S^{-}(Y)
$.
\end{proof}

\begin{theorem}
Let $S=(U,C_{G})$ be a soft covering approximation space and $X,Y\subseteq U$%
. $X\subseteq Y\Longrightarrow S^{-}(X)\subseteq S^{-}(Y)$ if and only if $%
S^{-}(X\cup Y)=S^{-}(X)\cup S^{-}(Y)$.
\end{theorem}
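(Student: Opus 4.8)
The plan is to prove the two implications of the biconditional separately, handling the non-trivial direction by reducing it to Proposition 19 (which says $S^{-}$ fixes every set that is a union of members of $C_{G}$) rather than by a direct computation with minimal descriptions. The implication ``$S^{-}(X\cup Y)=S^{-}(X)\cup S^{-}(Y)$ for all $X,Y$'' $\Longrightarrow$ ``$X\subseteq Y\Rightarrow S^{-}(X)\subseteq S^{-}(Y)$'' is immediate: if $X\subseteq Y$ then $X\cup Y=Y$, so $S^{-}(X)\subseteq S^{-}(X)\cup S^{-}(Y)=S^{-}(X\cup Y)=S^{-}(Y)$.

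For the converse, assume the monotonicity implication holds. Applying it to $X\subseteq X\cup Y$ and to $Y\subseteq X\cup Y$ and taking the union gives $S^{-}(X)\cup S^{-}(Y)\subseteq S^{-}(X\cup Y)$ at once. For the reverse inclusion I would first record the following observation, which is immediate from Definitions 10 and 11: for every $Z\subseteq U$ the set $S^{-}(Z)$ is a union of elements of $C_{G}$, since $S_{-}(Z)$ is such a union by definition and every set occurring in a minimal description $Md_{S}(x)$ is of the form $F(e)\in C_{G}$. Hence $W:=S^{-}(X)\cup S^{-}(Y)$ is a union of elements of $C_{G}$, so Proposition 19 yields $S^{-}(W)=W$. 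On the other hand, by property 3 of Theorem 14 we have $X\subseteq S^{-}(X)$ and $Y\subseteq S^{-}(Y)$, whence $X\cup Y\subseteq W$; applying the assumed monotonicity to this inclusion gives $S^{-}(X\cup Y)\subseteq S^{-}(W)=W=S^{-}(X)\cup S^{-}(Y)$. Combining the two inclusions finishes the proof.

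I do not anticipate a genuine obstacle: the argument is short, and the only slightly delicate point is the observation that $S^{-}$ always returns a union of members of the covering $C_{G}$, which is exactly what makes Proposition 19 applicable, together with noting that the monotonicity hypothesis is used twice (once for each inclusion) and so is genuinely necessary, consistent with property 2 of Theorem 15 failing in general. As an alternative one could instead mimic the hands-on proof of the preceding theorem, using Theorem 21 to replace the monotonicity hypothesis by the condition that every $F(e_{1})\cap F(e_{2})$ be a finite union of elements of $C_{G}$ and then chasing minimal descriptions, but the route through Proposition 19 is cleaner and avoids the case analysis.
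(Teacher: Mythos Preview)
Your proof is correct and is essentially the paper's own argument: the paper handles the $\Longleftarrow$ direction identically, and for the $\Longrightarrow$ direction it also obtains $S^{-}(X)\cup S^{-}(Y)\subseteq S^{-}(X\cup Y)$ from monotonicity, then uses $X\cup Y\subseteq S^{-}(X)\cup S^{-}(Y)$ (property 3 of Theorem 14), applies monotonicity again, and invokes Proposition 19 to conclude $S^{-}\bigl(S^{-}(X)\cup S^{-}(Y)\bigr)=S^{-}(X)\cup S^{-}(Y)$. Your explicit remark that $S^{-}(Z)$ is always a union of members of $C_{G}$ is precisely the fact the paper tacitly relies on when citing Proposition 19, so you have simply made that step visible.
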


\begin{proof}
$\Longrightarrow :$ By the assumption of this Theorem, $S^{-}(X)\subseteq
S^{-}(X\cup Y)$ and $S^{-}(Y)\subseteq S^{-}(X\cup Y)$, so $S^{-}(X)\cup
S^{-}(Y)\subseteq S^{-}(X\cup Y)$. Now we shall show that $S^{-}(X\cup
Y)\subseteq S^{-}(X)\cup S^{-}(Y)$. By property 3 of Theorem 14, $X\cup
Y\subseteq S^{-}(X)\cup S^{-}(Y)$. By the assumption of this Theorem, $%
S^{-}(X\cup Y)\subseteq S^{-}\left( S^{-}(X)\cup S^{-}(Y)\right) $. By
Proposition 19, $S^{-}\left( S^{-}(X)\cup S^{-}(Y)\right) =S^{-}(X)\cup
S^{-}(Y)$, so $S^{-}(X\cup Y)\subseteq S^{-}(X)\cup S^{-}(Y)$.

$\Longleftarrow:$ If $X\subseteq Y,\ S^{-}(Y)=S^{-}(X\cup Y)=S^{-}(X)\cup
S^{-}(Y)$, so $S^{-}(X)\subseteq S^{-}(Y)$.
\end{proof}

\begin{corollary}
Let $S=(U,C_{G})$ be a soft covering approximation space and $X,Y\subseteq U$%
. $S^{-}(X\cup Y)=S^{-}(X)\cup S^{-}(Y)$ if and only if $\forall
e_{1},e_{2}\in E,\ F(e_{1})\cap F(e_{2})$ is a finite union of elements in $%
C_{G}$.
\end{corollary}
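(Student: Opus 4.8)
The statement is an immediate consequence of the two preceding theorems, which both pivot on the monotonicity property
\[
(M):\qquad X\subseteq Y\Longrightarrow S^{-}(X)\subseteq S^{-}(Y).
\]
Theorem~22 asserts $(M)\iff\bigl[\forall e_{1},e_{2}\in E,\ F(e_{1})\cap F(e_{2})\text{ is a finite union of elements of }C_{G}\bigr]$, while Theorem~24 asserts $(M)\iff\bigl[S^{-}(X\cup Y)=S^{-}(X)\cup S^{-}(Y)\text{ for all }X,Y\subseteq U\bigr]$. Chaining these two equivalences through the common middle term $(M)$ yields exactly the equivalence claimed in the corollary.

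Concretely, the plan is: first, assume $S^{-}(X\cup Y)=S^{-}(X)\cup S^{-}(Y)$ holds for all $X,Y\subseteq U$; by the $(\Longleftarrow)$ direction of Theorem~24 this gives property $(M)$, and then the $(\Longrightarrow)$ direction of Theorem~22 delivers the pairwise-intersection condition on the $F(e_{i})$. Second, for the converse, assume $F(e_{1})\cap F(e_{2})$ is a finite union of elements of $C_{G}$ for all $e_{1},e_{2}\in E$; the $(\Longleftarrow)$ direction of Theorem~22 yields $(M)$, and the $(\Longrightarrow)$ direction of Theorem~24 then gives the desired additivity $S^{-}(X\cup Y)=S^{-}(X)\cup S^{-}(Y)$. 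One should note only that both theorems are invoked under the same standing hypotheses — $S=(U,C_{G})$ a soft covering approximation space with $E$ finite, the convention adopted just before Proposition~19 — so the chaining is legitimate.

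There is essentially no obstacle here: the corollary carries no new content beyond Theorems~22 and~24, and the only thing to be careful about is bookkeeping the direction of each implication so that the quantifier "for all $X,Y$" is matched consistently on both sides. (If one preferred a self-contained argument, one could instead re-run the proofs of Theorems~21 and~22 directly for the union operator — using that $S^{-}(F(e))=F(e)$ from Theorem~14(8) to extract the combinatorial condition, and the minimal-description argument of Theorem~22's $(\Longleftarrow)$ direction to push it back — but invoking the already-established equivalences is shorter and is clearly the intended route.)
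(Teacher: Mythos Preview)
Your approach is exactly the paper's: the corollary is obtained by chaining the two preceding equivalences through the monotonicity property $(M)$, with no additional argument needed. The only slip is in the numbering---what you call Theorems~22 and~24 are Theorems~21 and~22 in the paper (the corollary itself is numbered~23), so adjust the cross-references accordingly.
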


\begin{proof}
The proof is obvious by Theorem 21 and Theorem 22.
\end{proof}

\section{Some methods to set up topologies in soft covering approximation
space}

\begin{theorem}
Let $U$ be a nonempty universe set and $S=(U,C_{G})$ be a soft covering
approximation space. For each $e_{1},e_{2}\in E,\ F(e_{1})\cap F(e_{2})$ is
a finite union of elements of $C_{G}$.%
\begin{equation*}
\tau =\{X\subseteq U:S_{-}(X)=X\}
\end{equation*}%
be a collection of subsets of $U$. Then $\tau $ is called a topology over $U$%
.
\end{theorem}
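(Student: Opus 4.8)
The plan is to verify the three defining axioms of a topology for the collection $\tau=\{X\subseteq U:S_{-}(X)=X\}$, using as black boxes the properties collected in Theorem 14, Proposition 19, and Theorem 20. Throughout, the standing hypothesis is that $F(e_{1})\cap F(e_{2})$ is a finite union of elements of $C_{G}$ for all $e_{1},e_{2}\in E$; this is precisely the condition required by Theorem 20.

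First I would check that $\emptyset\in\tau$ and $U\in\tau$. This is immediate from properties 1 and 2 of Theorem 14, which give $S_{-}(U)=U$ and $S_{-}(\emptyset)=\emptyset$. Next I would show that $\tau$ is closed under arbitrary unions. Let $\{X_{i}:i\in I\}\subseteq\tau$. By Proposition 19, each $X_{i}=S_{-}(X_{i})$ is a union of some elements of $C_{G}$; consequently $\bigcup_{i\in I}X_{i}$ is again a union of elements of $C_{G}$, and applying Proposition 19 in the reverse direction yields $S_{-}\!\left(\bigcup_{i\in I}X_{i}\right)=\bigcup_{i\in I}X_{i}$, so $\bigcup_{i\in I}X_{i}\in\tau$. (Alternatively, one can argue directly: for each $j\in I$ we have $X_{j}=S_{-}(X_{j})\subseteq S_{-}\!\left(\bigcup_{i}X_{i}\right)$ by monotonicity (property 4 of Theorem 14), hence $\bigcup_{j}X_{j}\subseteq S_{-}\!\left(\bigcup_{i}X_{i}\right)$, and the reverse inclusion is property 3.)

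Finally I would show that $\tau$ is closed under finite intersections; it suffices to handle two sets and then induct. Let $X,Y\in\tau$, so $S_{-}(X)=X$ and $S_{-}(Y)=Y$. By Theorem 20, the standing hypothesis gives $S_{-}(X\cap Y)=S_{-}(X)\cap S_{-}(Y)=X\cap Y$, whence $X\cap Y\in\tau$. Iterating, $\tau$ is closed under all finite intersections, and together with the previous two items this establishes that $\tau$ is a topology on $U$.

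The step I expect to be the crux is closure under finite intersections, and it is exactly where the hypothesis is used: without the assumption that $F(e_{1})\cap F(e_{2})$ is a finite union of elements of $C_{G}$, the identity $S_{-}(X\cap Y)=S_{-}(X)\cap S_{-}(Y)$ can fail (this is property 1 of Theorem 15, shown to fail in Example 16), so $\tau$ would not in general be closed under intersection. Here the hypothesis enters through Theorem 20, after which the argument is routine; the $\emptyset,U$ and arbitrary-union axioms need only Theorem 14 and Proposition 19 and present no difficulty.
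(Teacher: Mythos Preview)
Your proof is correct and follows essentially the same approach as the paper: both verify the three topology axioms, handling $\emptyset,U$ via Theorem~14, finite intersections via Theorem~20 (which is where the hypothesis enters), and arbitrary unions via elementary properties. The only cosmetic difference is that for unions the paper uses your ``alternative'' monotonicity argument (property~4 plus property~3 of Theorem~14) directly, whereas you lead with Proposition~19; both are valid and equally short.
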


\begin{proof}
\ \ \ \ \ \ \ \ \ \ \ \ \ \ \ \ \ \ \ \ \ \ \ \ \ \ \ \ \ \ \ \ \ \ \ \ \ \
\ \ \ \ \ \ \ \ \ \ \ \ \ \ \ \ \ \ \ \ \ \ \ \ \ \ \ \ \ \ \ \ \ \ \ \ \ \
\ \ \ \ \ \ \ \ \ \ \ \ \ \ \ \ \ \ \ \ \ \ \ \ \ \ \ \ \ \ \ \ \ \ \ \ \ \
\ \ \ \ \ \ \ \ 

\begin{description}
\item[$O_{1})$] If $X=\emptyset $, then by Theorem 14, $S_{-}(\emptyset
)=\emptyset $. Hence $\emptyset \in \tau $. If $X=U$, then by Theorem 14, $%
S_{-}(U)=U$. Hence $U\in \tau $.

\item[$O_{2})$] Let for each $i\in I$, $A_{i}\in \tau $, i.e., $%
S_{-}(A_{i})=A_{i}$. Then there exists a $j\in I,\ A_{j}\in \tau $ such that 
$A_{j}\subseteq \dbigcup\limits_{i\in I}A_{i}$. From Theorem 14, $%
S_{-}(A_{j})\subseteq S_{-}(\dbigcup\limits_{i\in I}A_{i})$. Since $A_{j}\in
\tau ,\ S_{-}(A_{j})=A_{j}$. Hence $A_{j}\subseteq
S_{-}(\dbigcup\limits_{i\in I}A_{i})$. Since this property is satisfied for
each $j\in I$, we get%
\begin{equation}
\dbigcup\limits_{i\in I}A_{i}\subseteq S_{-}(\dbigcup\limits_{i\in I}A_{i})
\label{1}
\end{equation}%
Also by Theorem 14, we know that%
\begin{equation}
S_{-}(\dbigcup\limits_{i\in I}A_{i})\subseteq \dbigcup\limits_{i\in I}A_{i}
\label{2}
\end{equation}%
\allowbreak From (\ref{1}) and (\ref{2}), we get $S_{-}(\dbigcup\limits_{i%
\in I}A_{i})=\dbigcup\limits_{i\in I}A_{i}$. And so we conclude that $%
\dbigcup\limits_{i\in I}A_{i}\in \tau $.

\item[$O_{3})$] Let $A,B\in \tau $. Hence we get $S_{-}(A)=A$ and $S_{-}(B)=B
$. By Theorem 20, $S_{-}(A\cap B)=S_{-}(A)\cap S_{-}(B)=A\cap B$. Hence $%
S_{-}(A\cap B)=A\cap B$. Therefore $A\cap B\in \tau $.
\end{description}
\end{proof}

\begin{theorem}
Let $U$ be a nonempty universe set and $S=(U,C_{G})$ be a soft covering
approximation space$.$For each $e_{1},e_{2}\in E,\ F(e_{1})\cap F(e_{2})$ is
a finite union of elements of $C_{G}$.%
\begin{equation*}
K=\{X\subseteq U:S^{-}(X)=X\}
\end{equation*}%
be a collection of subsets of $U$. Then $K$ is called a topology over $U$.
\end{theorem}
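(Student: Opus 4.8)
The plan is to verify the three axioms $O_{1},O_{2},O_{3}$ of a topology for $K$, running in parallel with the proof of Theorem 24 but now reading off the behaviour of the \emph{upper} approximation $S^{-}$. The organising remark is that Proposition 19 supplies, for every $X\subseteq U$, the chain of equivalences
\[
S_{-}(X)=X\iff X\text{ is a union of elements of }C_{G}\iff S^{-}(X)=X ,
\]
so that $K=\tau$ as collections of subsets; hence the theorem follows at once from Theorem 24. It is nevertheless worth recording the direct check, since it isolates where the standing hypothesis on $F(e_{1})\cap F(e_{2})$ actually enters.

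For $O_{1}$ I would quote properties 1 and 2 of Theorem 14: $S^{-}(U)=U$ and $S^{-}(\emptyset)=\emptyset$, so $U,\emptyset\in K$. For $O_{2}$, given $A_{i}\in K$ for all $i\in I$, Proposition 19 makes each $A_{i}$ a union of elements of $C_{G}$, whence $\bigcup_{i\in I}A_{i}$ is again a union of elements of $C_{G}$, and Proposition 19 read in the reverse direction yields $S^{-}\!\bigl(\bigcup_{i\in I}A_{i}\bigr)=\bigcup_{i\in I}A_{i}$, i.e.\ $\bigcup_{i\in I}A_{i}\in K$. One cannot simply transcribe the $O_{2}$ argument of Theorem 24 here: for the lower approximation the inclusion $S_{-}(X)\subseteq X$ is automatic (property 3 of Theorem 14), whereas $S^{-}(X)\subseteq X$ fails in general, so the detour through Proposition 19 — equivalently, through the additivity of Corollary 23 together with the finiteness of $E$ — is unavoidable.

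For $O_{3}$, take $A,B\in K$ and, using Proposition 19 and the finiteness of $E$, write $A=F(e_{1})\cup\dots\cup F(e_{m})$ and $B=F(e_{1}')\cup\dots\cup F(e_{n}')$ with each $F(e_{i}),F(e_{j}')\in C_{G}$. Then $A\cap B=\bigcup_{i,j}\bigl(F(e_{i})\cap F(e_{j}')\bigr)$, and by the hypothesis of the theorem every $F(e_{i})\cap F(e_{j}')$ is a finite union of elements of $C_{G}$; hence $A\cap B$ is a union of elements of $C_{G}$, and Proposition 19 gives $S^{-}(A\cap B)=A\cap B$, i.e.\ $A\cap B\in K$. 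Finite intersections then follow by induction. (Equivalently, since $K=\tau$, this is verbatim step $O_{3}$ of Theorem 24, which rested on Theorem 20.)

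I expect no genuine difficulty beyond bookkeeping. The one substantive point — and the sole place the intersection hypothesis is used — is $O_{3}$; the only pitfall is the temptation to imitate Theorem 24 line by line in $O_{2}$, which does not work because $S^{-}$ is an outer, not an inner, operator. Routing that step through Proposition 19, hence through the identity $K=\tau$, is what makes it go.
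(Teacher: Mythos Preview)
Your argument is correct, but it runs along a different axis than the paper's. You verify the \emph{open-set} axioms $O_{1},O_{2},O_{3}$ (stability under arbitrary unions and finite intersections), resting everything on Proposition~19 and the identification $K=\tau$; in effect you reduce the whole statement to Theorem~24. The paper instead verifies \emph{closed-set} axioms, labelled $C_{1},C_{2},C_{3}$: it shows $K$ is stable under arbitrary \emph{intersections} (using the monotonicity $X\subseteq Y\Rightarrow S^{-}(X)\subseteq S^{-}(Y)$ of Theorem~21 together with $X\subseteq S^{-}(X)$ from Theorem~14) and under finite \emph{unions} (using the additivity $S^{-}(A\cup B)=S^{-}(A)\cup S^{-}(B)$ of Corollary~23). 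Your route is shorter and makes the equality $K=\tau$ explicit, so nothing beyond Theorem~24 is really needed; the paper never invokes Proposition~19 or Theorem~24 here, treating $S^{-}$ throughout as a Kuratowski-style closure operator and obtaining the dual family of fixed points directly. Both are legitimate and ultimately agree because $E$, and hence $K$, is finite. One small contrast worth noting: in the paper's argument the intersection hypothesis enters in \emph{both} $C_{2}$ and $C_{3}$ (through Theorem~21 and Corollary~23 respectively), whereas in your argument it is used only at $O_{3}$, exactly as you say.
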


\begin{proof}

\begin{description}
\item[$C_{1})$] If $X=\emptyset $, then by Theorem 14, $S^{-}(\emptyset
)=\emptyset $. Hence $\emptyset \in K$. If $X=U$, then by Theorem 14, $%
S^{-}(U)=U$. Hence $U\in \tau $.

\item[$C_{2})$] Let for each $i\in I,\ A_{i}\in K$, i.e., $S^{-}(A_{i})=A_{i}
$. Then there exists a $j\in I,\ A_{j}\in K$ such that $\dbigcap\limits_{i%
\in I}A_{i}\subseteq A_{j}$. From Theorem 21, $S^{-}(\dbigcap\limits_{i\in
I}A_{i})\subseteq S^{-}(A_{j})$. Since $A_{j}\in K,\ S^{-}(A_{j})=A_{j}$.
Hence $S^{-}(\dbigcap\limits_{i\in I}A_{i})\subseteq A_{j}$. Since this
property is satisfied for each $j\in I$, we get%
\begin{equation}
S^{-}(\dbigcap\limits_{i\in I}A_{i})\subseteq \dbigcap\limits_{i\in I}A_{i}
\label{3}
\end{equation}%
Also by Theorem 14, we know that%
\begin{equation}
\dbigcap\limits_{i\in I}A_{i}\subseteq S^{-}(\dbigcap\limits_{i\in I}A_{i})
\label{4}
\end{equation}%
From (\ref{3}) and (\ref{4}), we obtain that $S^{-}(\dbigcap\limits_{i\in
I}A_{i})=\dbigcap\limits_{i\in I}A_{i}$. Therefore $\dbigcap\limits_{i\in
I}A_{i}\in K$. \ \ \ \ \ \ \ \ \ \ \ \ \ \ \ \ \ \ \ \ \ \ \ \ \ \ \ \ \ \ \
\ \ \ \ \ \ \ \ \ \ \ \ \ \ \ \ \ \ \ \ \ \ \ \ \ \ \ \ \ \ \ \ \ \ \ \ \ \
\ \ \ \ \ \ \ \ \ \ \ \ \ \ \ \ \ \ \ \ \ \ \ \ \ \ \ \ \ \ \ \ \ \ \ \ \ \
\ 

\item[$C_{3})$] Let $A,B\in K$. Hence $S^{-}(A)=A$ and $S^{-}(B)=B$. By
Corollary 23, we obtain that $S^{-}(A\cup B)=S^{-}(A)\cup S^{-}(B)=A\cup B$.
Hence $S^{-}(A\cup B)=A\cup B$. Therefore $A\cup B\in K$.
\end{description}
\end{proof}

\begin{remark}
Let $U$ be a nonempty universe set and $S=(U,C_{G})$ be a soft covering
approximation space. We can set up a topology over $U$ when we consider the
soft covering of the universe as a subbase.
\end{remark}

\begin{example}
Let $S=(U,C_{G})$ be a soft covering approximation space where $%
U=\{h_{1},h_{2},h_{3},h_{4},h_{5}\},\ E=\{e_{1},e_{2},e_{3}\},\
F(e_{1})=\{h_{1},h_{2},h_{3}\},\ F(e_{2})=\{h_{3},h_{4}\},\
F(e_{3})=\{h_{4},h_{5}\}$. Then
\end{example}

\begin{align*}
\mathit{S} & =C_{G}=\left\{
\{h_{1},h_{2},h_{3}\},\{h_{3},h_{4}\},\{h_{4},h_{5}\}\right\} \\
& \dbigcap \\
\beta & =\left\{
\{h_{1},h_{2},h_{3}\},\{h_{3},h_{4}\},\{h_{4},h_{5}\},\{h_{3}\},\{h_{4}\}%
\right\} \\
& \dbigcap \\
\tau &
=\{\emptyset,U,\{h_{1},h_{2},h_{3},h_{4}\},\{h_{3},h_{4},h_{5}\},%
\{h_{1},h_{2},h_{3}\},\{h_{3},h_{4}\},\{h_{4},h_{5}\},\{h_{3}\},\{h_{4}\}\}
\end{align*}

\section{Relationship between concepts of topology and soft covering lower
and upper approximations}

In soft covering based rough set theory the reference space is the soft
covering approximation space. We will consider the soft covering of the
universe as a subbase for topology and we will obtain the closure, the
interior and the boundary of a set with respect to this topology, then we
will compare these concepts with the soft covering upper approximation, the
soft covering lower approximation and the soft covering boundary region of a
set.

\begin{proposition}
Let $S=(U,C_{G})$ be a soft covering approximation space and $X\subseteq U$.
The soft covering lower approximation is contained in the interior of a set
defined by taking this soft covering as a subbase for topology.
\end{proposition}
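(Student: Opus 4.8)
The plan is to unwind the construction of the topology and then observe that every set appearing in the union that defines $S_{-}(X)$ is already an open subset of $X$, so that the union sits inside the largest open subset of $X$, namely the interior.

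First I would make the subbase construction explicit. Taking the soft covering $C_{G}=\{F(e):e\in E\}$ as a subbase, the induced base $\beta$ is the family of all finite intersections of members of $C_{G}$, and the topology $\tau$ is the family of all unions of members of $\beta$. In particular, each $F(e)\in C_{G}$ is itself a member of $\tau$, since it is a (one-fold) intersection of subbasic sets, hence a basic open set, hence $\tau$-open.

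Next, I would recall that $S_{-}(X)=\bigcup\{F(e):e\in E\wedge F(e)\subseteq X\}$. By the previous step every $F(e)$ occurring in this union is $\tau$-open, and by the condition defining the union each such $F(e)$ is contained in $X$. Therefore $S_{-}(X)$ is a union of $\tau$-open subsets of $X$: as a union of open sets it is $\tau$-open, and as a union of subsets of $X$ it is contained in $X$. In the degenerate case where no $F(e)$ is contained in $X$, the union is empty and $S_{-}(X)=\emptyset\in\tau$, so the conclusion still holds. Finally, since $\mathrm{int}(X)$, the interior of $X$ in $\tau$, is by definition the largest $\tau$-open set contained in $X$ (equivalently, the union of all $\tau$-open subsets of $X$), the inclusion $S_{-}(X)\subseteq\mathrm{int}(X)$ follows at once.

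I do not expect any genuine obstacle here; the argument is a direct consequence of the subbase definition. The only points worth a word of care are the empty-union case noted above and the observation that the inclusion is in general strict: a finite intersection $F(e_{1})\cap\cdots\cap F(e_{k})$ may be contained in $X$ while no single $F(e_{i})$ is, so $\mathrm{int}(X)$ can properly contain $S_{-}(X)$. This is also why the statement only claims containment rather than equality.
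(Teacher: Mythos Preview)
Your proposal is correct and follows essentially the same approach as the paper: both arguments rest on the observation that each $F(e)$ in the union defining $S_{-}(X)$ is a subbasic, hence open, set contained in $X$, so the union lies inside $\mathrm{int}(X)$. The paper phrases this elementwise (pick $x\in S_{-}(X)$, find an open $F(e)\subseteq X$ containing it), whereas you work at the set level and invoke the maximality of the interior; the content is the same, and your added remarks on the empty-union case and on strictness are accurate but not required for the proof.
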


\begin{proof}
Let $C_{G}$ be a soft covering of the universe $U$, $X\subseteq U$ and $x\in
S_{-}(X)$. Then, $\exists F(e)\in S_{-}(X)$ such that $x\in F(e)$. Since $%
F(e)$ is an element of subbase for the topology defined on $U$ then every $%
F(e)\in C_{G}$ is open hence $x\in\cup\left\{ F(e)\subseteq U:F(e)\subseteq
X\ open\right\} $. Thus $x\in int(X)$ and $S_{-}(X)\subseteq int(X)$.
\end{proof}

\begin{proposition}
Let $S=(U,C_{G})$ be a soft covering approximation space and $X\subseteq U$.
The soft covering upper approximation of $X$ can not be compared with the
closure of $X$ with respect to the topology induced by soft covering.
\end{proposition}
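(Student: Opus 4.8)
The plan is to prove this negative assertion by exhibiting counterexamples that rule out both possible inclusions at once. Concretely, I would work inside the soft covering approximation space of Example~27, where $U=\{h_{1},h_{2},h_{3},h_{4},h_{5}\}$ and $C_{G}=\{\{h_{1},h_{2},h_{3}\},\{h_{3},h_{4}\},\{h_{4},h_{5}\}\}$, and use the topology $\tau$ obtained by regarding the soft covering as a subbase --- precisely the $\tau$ displayed after Example~27. Inside this one space I would pick two subsets $X$: one witnessing $S^{-}(X)\nsubseteq cl(X)$ and one witnessing $cl(X)\nsubseteq S^{-}(X)$, where $cl$ is the closure operator of $\tau$. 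Since either general inclusion would have to hold for every space and every $X$, producing these two witnesses suffices.

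The first step is purely topological bookkeeping: from the nine open sets of $\tau$ one reads off, by complementation, the nine closed sets, namely $U,\emptyset,\{h_{5}\},\{h_{1},h_{2}\},\{h_{4},h_{5}\},\{h_{1},h_{2},h_{5}\},\{h_{1},h_{2},h_{3}\},\{h_{1},h_{2},h_{4},h_{5}\},\{h_{1},h_{2},h_{3},h_{5}\}$, and then closures are computed as intersections of the closed supersets. Taking $X=\{h_{4}\}$: no $F(e)$ lies inside $X$, so $S_{-}(X)=\emptyset$, while $Md_{S}(h_{4})=\{\{h_{3},h_{4}\},\{h_{4},h_{5}\}\}$, which gives $S^{-}(X)=\{h_{3},h_{4},h_{5}\}$; on the other hand the smallest closed set containing $h_{4}$ is $\{h_{4},h_{5}\}$, so $cl(X)=\{h_{4},h_{5}\}$ and $h_{3}\in S^{-}(X)\setminus cl(X)$. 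Hence $S^{-}(X)\nsubseteq cl(X)$.

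For the reverse direction I would take $X=\{h_{1},h_{2},h_{3},h_{4}\}=F(e_{1})\cup F(e_{2})$. Being a union of members of $C_{G}$, Proposition~19 yields $S^{-}(X)=X$; but $X$ is not closed, since its complement $\{h_{5}\}$ is not open, and the only closed set containing $X$ is $U$, so $cl(X)=U$ and $h_{5}\in cl(X)\setminus S^{-}(X)$, i.e. $cl(X)\nsubseteq S^{-}(X)$. Combining the two computations shows that in general neither $S^{-}(X)\subseteq cl(X)$ nor $cl(X)\subseteq S^{-}(X)$ holds, which is the claim. The only delicate point is carrying out the generation of $\tau$ from the subbase and the listing of its closed sets without slips; once that finite computation is in hand, both non-inclusions are immediate, so I anticipate no genuine obstacle beyond presenting the bookkeeping cleanly.
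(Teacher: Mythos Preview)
Your proposal is correct and follows essentially the same route as the paper: the paper does not give a formal proof of this proposition but justifies it through Example~31, working in the very space of Example~27 and exhibiting two subsets (there $X=\{h_{2},h_{3},h_{4}\}$ and $Y=\{h_{1},h_{4},h_{5}\}$) for which $S^{-}(X)\subsetneq cl(X)$ and $cl(Y)\subsetneq S^{-}(Y)$. Your witnesses $\{h_{4}\}$ and $\{h_{1},h_{2},h_{3},h_{4}\}$ are different but equally valid, and your bookkeeping on the closed sets of $\tau$ is accurate.
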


\begin{corollary}
Let $S=(U,C_{G})$ be a soft covering approximation space and $X\subseteq U$.
The soft covering boundary region of $X$ can not be compared with the
boundary of $X$ with respect to the topology induced by soft covering.
\end{corollary}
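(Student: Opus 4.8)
The assertion that the two boundaries ``cannot be compared'' should be read as: no inclusion holds in general, i.e. one can exhibit a soft covering approximation space and a set $X$ with $BND_{S}(X)\nsubseteq \partial X$, and another with $\partial X\nsubseteq BND_{S}(X)$, where $\partial X=cl(X)-int(X)$ is formed in the topology $\tau$ generated by $C_{G}$ as a subbase. The plan is to produce such witnesses, and the most economical choice is to reuse the space of Example 27, whose subbase, base $\beta$ and topology $\tau$ were already written down there; this reduces the whole argument to a handful of finite computations of $S_{-}$, $S^{-}$, $int$ and $cl$.

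For the failure of $BND_{S}(X)\subseteq \partial X$ I would take $X=\{h_{3}\}$. No $F(e_{i})$ is contained in a singleton, so $S_{-}(X)=\emptyset$, while $Md_{S}(h_{3})=\{F(e_{1}),F(e_{2})\}$ gives $S^{-}(X)=F(e_{1})\cup F(e_{2})=\{h_{1},h_{2},h_{3},h_{4}\}$; hence $BND_{S}(X)=\{h_{1},h_{2},h_{3},h_{4}\}$. On the topological side $\{h_{3}\}=F(e_{1})\cap F(e_{2})$ is a basic open set, so $int(X)=\{h_{3}\}$ and $cl(X)=\{h_{1},h_{2},h_{3}\}$, giving $\partial X=\{h_{1},h_{2}\}$. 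Thus $h_{3}\in BND_{S}(X)\setminus \partial X$. Incidentally this example displays the strict inclusion $S_{-}(X)\subsetneq int(X)$ behind Proposition 28, which is the real source of the phenomenon: the interior sees finite intersections of the subbase, whereas $S_{-}$ does not.

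For the failure of $\partial X\subseteq BND_{S}(X)$ I would take $X=F(e_{1})\cup F(e_{2})=\{h_{1},h_{2},h_{3},h_{4}\}$. By Proposition 19, $S_{-}(X)=S^{-}(X)=X$, so $BND_{S}(X)=\emptyset$. However $X$ is not closed in $\tau$ (its complement $\{h_{5}\}$ is not open), so $cl(X)=U$ while $int(X)=X$, whence $\partial X=\{h_{5}\}\neq \emptyset$. Hence $\partial X\nsubseteq BND_{S}(X)$, and combining the two examples completes the argument.

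An alternative, less computational route deduces the statement directly from the two preceding results: Proposition 28 gives $S_{-}(X)\subseteq int(X)$ with possibly strict inclusion, and Proposition 29 already records that $S^{-}(X)$ and $cl(X)$ are incomparable; since $BND_{S}(X)$ and $\partial X$ are obtained from these by subtracting the respective lower sets, the absence of any inclusion between the upper sets together with a genuine gap at the lower level precludes any inclusion between the boundaries. Either way the only delicate point is the bookkeeping of $cl$ and $int$ in Example 27's $\tau$, and since that topology is finite and explicitly listed this is routine rather than a real obstacle.
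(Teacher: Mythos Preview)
Your proof is correct and follows essentially the same approach as the paper: the corollary is justified there by Example 31, which works in the very same space of Example 27 and exhibits two sets showing that neither inclusion between $BND_{S}(X)$ and $Bnd(X)$ holds in general. The paper's witnesses are $X=\{h_{2},h_{3},h_{4}\}$ (where $h_{5}\in Bnd(X)\setminus BND_{S}(X)$) and $Y=\{h_{1},h_{4},h_{5}\}$ (where $h_{3}\in BND_{S}(Y)\setminus Bnd(Y)$), while yours are $\{h_{3}\}$ and $\{h_{1},h_{2},h_{3},h_{4}\}$; your choices are arguably cleaner since the second one makes $BND_{S}$ vanish outright. One caution: your ``alternative, less computational route'' is only heuristic---from $S^{-}$ and $cl$ being incomparable and $S_{-}\subseteq int$ one cannot formally deduce incomparability of the differences without going back to explicit witnesses, so that paragraph should be framed as motivation rather than as a stand-alone proof.
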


\begin{example}
Let $S=(U,C_{G})$ be a soft covering approximation space, where $%
U=\{h_{1},h_{2},h_{3},h_{4},h_{5}\},\ E=\{e_{1},e_{2},e_{3},\},\
F(e_{1})=\{h_{1},h_{2},h_{3}\},\ F(e_{2})=\{h_{3},h_{4}\},\
F(e_{3})=\{h_{4},h_{5}\}$. Suppose that $X=\{h_{2},h_{3},h_{4}\}$, then $%
S_{-}(X)=\{h_{3},h_{4}\},~S^{-}(X)=\{h_{1},h_{2},h_{3},h_{4}\},\
BND_{S}(X)=\{h_{1},h_{2}\}$ and by using the Example 27, we get $%
int(X)=\{h_{3},h_{4}\},\ cl(X)=U,~Bnd(X)=\{h_{1},h_{2},h_{5}\}$. Thus we
obtain, $S_{-}(X)\subseteq int(X),~S^{-}(X)\subseteq cl(X)$ and $%
BND_{S}(X)\subseteq Bnd(X)$.\newline
Also, suppose that $Y=\{h_{1},h_{4},h_{5}\}$, then $S_{-}(Y)=\{h_{4},h_{5}%
\},~S^{-}(Y)=U,~BND_{S}(Y)=\{h_{1},h_{2},h_{3}\}$ and by using the Example
27, we get $int(Y)=\{h_{4},h_{5}\},~cl(Y)=\{h_{1},h_{2},h_{4},h_{5}%
\},~Bnd(Y)=\{h_{1},h_{2}\}$. Thus we obtain, $S_{-}(Y)\subseteq
int(Y),~cl(Y)\subseteq S^{-}(Y)$ and $Bnd(Y)\subseteq BND_{S}(Y)$.
\end{example}

\section{Special condition of soft covering approximation space}

\begin{definition}
\cite{b} A soft set $G=(F,E)$ over $U$ is called a partition soft set if $%
\{F(e):e\in E\}$ forms a partition of $U$.
\end{definition}

\begin{theorem}
\cite{b} Let $G=(F,E)$ be a partition soft set over $U$ and $P=(U,G)$ be a
soft covering approximation space. Define an equivalence relation $R$ on $U$
by%
\begin{equation*}
(x,y)\in R\Longleftrightarrow\exists e\in E,\ \{x,y\}\subseteq F(e)
\end{equation*}
for all $x,y\in U$. Then, for all $X\subseteq U$,%
\begin{equation*}
R_{-}(X)=P_{-}(X)\ and\ R^{-}(X)=P^{-}(X).
\end{equation*}
\end{theorem}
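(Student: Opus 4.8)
The plan is to first identify the equivalence classes of $R$ with the blocks of the partition, and then reduce both equalities to routine set-theoretic bookkeeping, with the soft minimal description computed explicitly along the way.

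First I would verify that $R$ is indeed an equivalence relation on $U$. Reflexivity holds because a partition soft set is in particular a covering soft set, so every $x\in U$ lies in some $F(e)$; symmetry is immediate from the symmetric form of the defining condition; and transitivity uses the partition hypothesis: if $\{x,y\}\subseteq F(e)$ and $\{y,z\}\subseteq F(e')$, then $y\in F(e)\cap F(e')$, and since distinct blocks of a partition are disjoint we must have $F(e)=F(e')$, whence $\{x,z\}\subseteq F(e)$. The same disjointness argument shows the $R$-class of $x$ is exactly the unique block $F(e)$ containing $x$, so $U/R=\{F(e):e\in E\}=C_{G}$, identifying parameters that name the same block. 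With this in hand, the lower-approximation identity is immediate: by Definition~1, $R_{-}(X)=\bigcup\{Y\in U/R:Y\subseteq X\}=\bigcup\{F(e):e\in E,\ F(e)\subseteq X\}$, which is literally $P_{-}(X)$ from Definition~11.

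Then I would turn to the upper-approximation identity, which is the only place any care is needed. The key preliminary step is to compute the soft minimal description: for $x\in F(e)$, disjointness of the blocks forces $F(e)$ to be the only member of $C_{G}$ containing $x$, so $Md_{P}(x)=\{F(e)\}$ and $\bigcup Md_{P}(x)=F(e)$. Hence $P^{-}(X)=P_{-}(X)\cup\bigcup\{F(e_{x}):x\in X-P_{-}(X)\}$, where $F(e_{x})$ is the block containing $x$. I would then compare this with $R^{-}(X)=\bigcup\{F(e):F(e)\cap X\neq\emptyset\}$ by two inclusions. For $P^{-}(X)\subseteq R^{-}(X)$: $P_{-}(X)=R_{-}(X)\subseteq R^{-}(X)$, and each $F(e_{x})$ with $x\in X-P_{-}(X)$ meets $X$ at $x$, so it lies in $R^{-}(X)$. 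For $R^{-}(X)\subseteq P^{-}(X)$: take a block $F(e)$ with some $x\in F(e)\cap X$; if $x\in P_{-}(X)$ then $x$ lies in some block $F(a)\subseteq X$, and disjointness gives $F(e)=F(a)\subseteq X$, so $F(e)\subseteq P_{-}(X)$; if instead $x\notin P_{-}(X)$, then $F(e)=F(e_{x})$ is one of the blocks in the second part of $P^{-}(X)$. Either way $F(e)\subseteq P^{-}(X)$, which finishes the inclusion and the theorem.

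The main obstacle, such as it is, is purely organizational: keeping straight that several parameters may index the same block, and invoking disjointness of the partition at exactly the three places it is needed — transitivity of $R$, the computation of $Md_{P}(x)$, and the case split in the inclusion $R^{-}(X)\subseteq P^{-}(X)$. There is no analytic difficulty; once $U/R=C_{G}$ is established and $Md_{P}(x)$ is pinned down to a singleton, both equalities are forced by the definitions.
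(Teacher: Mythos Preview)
Your proof is correct and complete. Note that the paper itself does not supply a proof of this theorem: it is quoted from \cite{b} and used as a black box in the subsequent Theorem~34, so there is no in-paper argument to compare against. Your approach --- identifying $U/R$ with the partition blocks, reading off $R_{-}(X)=P_{-}(X)$ directly, then computing $Md_{P}(x)$ as a singleton and verifying $R^{-}(X)=P^{-}(X)$ by double inclusion --- is exactly the natural one and handles the only subtlety (that $P^{-}$ is defined via minimal descriptions rather than as a straightforward union of meeting blocks) cleanly.
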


\begin{theorem}
Let $S=(U,C_{G})$ be a soft covering approximation space and $X\subseteq U$.
If $G=(F,E)$ is a partition soft set then the soft covering upper
approximation and the soft covering lower approximation of $X$ are equal to
the closure and the interior of the set with respect to the topology induced
by this covering, respectively.
\end{theorem}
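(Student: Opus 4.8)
The plan is to combine the Pawlak-reduction Theorem~33 with an explicit description of the topology that $C_{G}$ generates, as a subbase, when $G$ is a partition soft set. First I would note that if $\{F(e):e\in E\}$ is a partition of $U$, then for $e_{1}\neq e_{2}$ we have $F(e_{1})\cap F(e_{2})=\emptyset$, the empty union of members of $C_{G}$, while $F(e)\cap F(e)=F(e)$; hence the condition ``$F(e_{1})\cap F(e_{2})$ is a finite union of elements of $C_{G}$'' appearing in Theorems~24 and 25 and Corollary~23 is automatically met. More importantly, every finite intersection of members of $C_{G}$ is either some $F(e)$ or $\emptyset$, so the base generated by the subbase $C_{G}$ is $C_{G}\cup\{\emptyset,U\}$, and the induced topology $\tau$ is exactly the family of all unions of partition blocks (together with $\emptyset$). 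In particular every $F(e)$ is clopen in $\tau$, its complement being the union of the remaining blocks.

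Next I would compute the interior operator of $\tau$. For $X\subseteq U$, any $\tau$-open subset of $X$ is a union of blocks each contained in $X$, so $\operatorname{int}(X)=\bigcup\{F(e):F(e)\subseteq X\}$, which is precisely $S_{-}(X)$ by Definition~11. (Equivalently, this description gives $\operatorname{int}(X)\subseteq S_{-}(X)$, and the reverse inclusion is Proposition~28.) For the closure, since every block is clopen, the $\tau$-closed sets are also exactly the unions of blocks, so $\operatorname{cl}(X)$ is the smallest union of blocks containing $X$, that is, $\operatorname{cl}(X)=\bigcup\{F(e):F(e)\cap X\neq\emptyset\}$.

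Finally I would identify this last set with $S^{-}(X)$. The cleanest route is Theorem~33: the relation $R$ defined there ($(x,y)\in R\iff\exists e,\ \{x,y\}\subseteq F(e)$) is an equivalence relation whose classes are precisely the blocks $F(e)$, and Theorem~33 gives $S^{-}(X)=P^{-}(X)=R^{-}(X)=\bigcup\{Y_{i}\in U/R:Y_{i}\cap X\neq\emptyset\}$, which is exactly $\operatorname{cl}(X)$. Alternatively one argues directly from Definitions~10 and 11: in a partition soft set each $x$ lies in a unique block $F(e_{x})$, which is automatically its soft minimal description, so $S^{-}(X)=S_{-}(X)\cup\bigcup\{F(e_{x}):x\in X-S_{-}(X)\}$, and this collapses to the union of all blocks meeting $X$. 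Combining the three computations yields $S_{-}(X)=\operatorname{int}(X)$ and $S^{-}(X)=\operatorname{cl}(X)$, as claimed.

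I expect the one point that genuinely needs care to be the identification of ``the topology induced by this covering'' with the partition topology: verifying that finite intersections of blocks collapse to a block or $\emptyset$, so that the subbase-generated topology is precisely the unions of blocks, and then observing that in that topology the blocks are clopen --- which is exactly what forces the closure to coincide with $S^{-}$ rather than merely contain $X$. Everything else is a routine unwinding of Definitions~10 and 11 together with appeals to Theorem~33 and Proposition~28.
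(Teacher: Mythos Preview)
Your proposal is correct. The paper's own proof is essentially a two-line appeal: by Theorem~33 one has $S_{-}(X)=R_{-}(X)$ and $S^{-}(X)=R^{-}(X)$, and then the authors simply assert the classical fact that for a Pawlak approximation space $R_{-}(X)=\operatorname{int}(X)$ and $R^{-}(X)=\operatorname{cl}(X)$ in the partition topology. Your argument follows the same skeleton but actually supplies the content the paper leaves implicit: you verify that the subbase $C_{G}$ generates exactly the partition topology (blocks are clopen, open sets are unions of blocks), and you compute $\operatorname{int}$ and $\operatorname{cl}$ explicitly from that description. Your direct verification that $\operatorname{int}(X)=\bigcup\{F(e):F(e)\subseteq X\}=S_{-}(X)$ in fact bypasses Theorem~33 for the lower approximation entirely, and your alternative route via the unique minimal description $Md_{S}(x)=\{F(e_{x})\}$ does the same for the upper approximation; so you have given, in addition to the paper's argument, a self-contained proof that does not rely on Theorem~33 at all. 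The extra remark about the hypothesis of Theorems~20--21 and Corollary~23 being automatically satisfied is correct but not needed here.
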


\begin{proof}
Let $G=(F,E)$ be a partition soft set then $R_{-}(X)=S_{-}(X)$ and $%
R^{-}(X)=S^{-}(X)$. And we know that $R_{-}(X)=int(X)$ and $R^{-}(X)=cl(X)$.
Hence we conclude that $S_{-}(X)=int(X)$ and $S^{-}(X)=cl(X)$.
\end{proof}

\begin{example}
Let $G=(F,E)$ be a partition soft set and $S=(U,C_{G})$ be a soft covering
approximation space, where $U=\{h_{1},h_{2},h_{3},h_{4},h_{5},h_{6},h_{7}\},%
\ E=\{e_{1},e_{2},e_{3}\},\ F(e_{1})=\{h_{1},h_{2}\},\
F(e_{2})=\{h_{3},h_{4}\}$ and $F(e_{3})=\{h_{5},h_{6},h_{7}\}$. Then we get%
\begin{align*}
\mathit{S} & =C_{G}=\{\{h_{1},h_{2}\},\{h_{3},h_{4}\},\{h_{5},h_{6},h_{7}\}\}
\\
& \dbigcap \\
\beta & =\{\{h_{1},h_{2}\},\{h_{3},h_{4}\},\{h_{5},h_{6},h_{7}\}\} \\
& \dbigcap \\
\tau & =\{\{h_{1},h_{2}\},\{h_{3},h_{4}\},\{h_{5},h_{6},h_{7}\}\}
\end{align*}
Suppose that $X=\{h_{1},h_{2},h_{3}\}$, then $S_{-}(X)=\{h_{1},h_{2}\},\
S^{-}(X)=\{h_{1},h_{2},h_{3},h_{4}\},\ BND_{S}(X)=\{h_{3},h_{4}\}$ and $%
int(X)=\{h_{1},h_{2}\},\ cl(X)=\{h_{1},h_{2},h_{3},h_{4}\},\
Bnd(X)=\{h_{3},h_{4}\}$. Hence $S_{-}(X)=int(X),\ S^{-}(X)=cl(X)$ and $%
BND_{S}(X)=Bnd(X)$. \qquad\ \ \ \ \ 
\end{example}


\begin{thebibliography}{99}
\bibitem{1} Akta\c{s} H., \c{C}a\u{g}man N., Soft sets and soft groups,
Inform. Sci. 77 (2007) 2726-2735.

\bibitem{2} Bryniarski E., A calculus of rough sets of the first order,
Bulletin of the Polish Academy of Sciences Mathematics 36 (1989) 71-77.

\bibitem{3} Feng F., Li C., Davvaz B., Soft sets combined with fuzzy sets
and rough sets:a tentative approach, Soft Comput. 14 (2010) 899-911.

\bibitem{b} Feng F., Xiaoyan L., Violeta L. and Young J. B., Soft sets and
soft rough sets, Information Sciences 181 (2011) 1125-1137.

\bibitem{c} Kondo M., On the structure of generalized rough sets,
Information Sciences 176 (2006) 589-600.

\bibitem{d} Lashin E. F., Kozae A. M., Abo Khadra A. A. and Medhat T., Rough
set theory for topological spaces, International Journal of Approximate
reasoning 40 (2005) 35-43.

\bibitem{e} Li Z., Topological properties of generalized rough sets, 2010
Seventh International Conference on Fuzzy Syst. and Knowledge Discovery
(FSKD 2010).

\bibitem{f} Mahanta J. and Das P. K., Topological properties of Yao's rough
set, World Academy of Sci., Engineering and Technology 76 (2011).

\bibitem{g} Medhat T., Topological spaces and Covering rough sets, Adv. in
Inform. Sci. and Service Sci. (AISS) 3(8) (2011).

\bibitem{4} Molodtsov D., Soft set theory-First results, Comput. Math. Appl.
37(4-5) (1999) 19-31.

\bibitem{5} Pawlak Z., Rough Sets, Int. J. Inf. Comp Sci. 11 (1982) 341-356.

\bibitem{7} Pawlak Z., Skowron A., Rudiments of rough sets, Inf. Sci. 177
(2007) 3-27.

\bibitem{m} Pomykala J. A., Approximation operations in approximation space,
Bulletin of the Polish Academy of Sci. Math. 35(9-10) (1987) 653-662.

\bibitem{8} Roy A. R., Maji P. K., A fuzzy soft set theoretic approach to
decision making problems, J. Comput. Appl. Math. 203 (2007) 412-418.

\bibitem{9} Yuksel S., Tozlu N. and Dizman T., An Application of
Multicriteria Group Decision Making by Soft Covering Based Rough Sets,
(2014), (Submitted to Filomat).

\bibitem{n} Zadeh L. A., Fuzzy sets, Inform. Control. 8 (1965) 338-353.

\bibitem{10} Zhu W. and Wang F., Reduction and axiomization\ of covering
generalized rough sets, Inform. Sci. 152(1) (2003) 217-230.

\bibitem{11} Zhu W. and Wang F., Properties of the First Type of Covering
Based Rough Sets, Sixth IEEE International Conference on Data Mining -
Workshops (ICDMW 2006).

\bibitem{12} Zhu W., Topological approaches to covering rough sets, Inform.
Sci. 177 (2007) 1499-1508.

\bibitem{13} Zhu W., Relationship among basic concepts in covering based
rough sets, Inform. Sci. 179 (2009) 2478-2486.
\end{thebibliography}
\end{document}